\begin{document}

%%%%%%%%%%%%%%%%%%%%%%%%%%%%%%%%%%%%%%%%%%%%%%%%%%%%%%%%%%%%%%%%%%%
%%                                                               %%
%% No need for \maketitle.                                       %%
%%                                                               %%
%%%%%%%%%%%%%%%%%%%%%%%%%%%%%%%%%%%%%%%%%%%%%%%%%%%%%%%%%%%%%%%%%%%

%%%%%%%%%%%%%%%%%%%%%%%%%%%%%%%%%%%%%%%%%%%%%%%%%%%%%%%%%%%%%%%%%%%
%%                                                               %%
%% Please replace what follows by the body of your article       %%
%% (up to the bibliography):                                     %%
%%                                                               %%
%%%%%%%%%%%%%%%%%%%%%%%%%%%%%%%%%%%%%%%%%%%%%%%%%%%%%%%%%%%%%%%%%%%

\section{Introduction}
Standard first-passage percolation (FPP) was introduced by
Hammersley and Welsh \cite{r13} in 1965 as a model of fluid flow
through a random medium.  See \cite{r6} for the basic theory and
Section 2 in \cite{r14} for a summary of recent progress. The usual
setup is that of FPP on lattice $\mathbb{Z}^d$, where i.i.d.
non-negative random variables are assigned to nearest-neighbor edges
in $\mathbb{Z}^d$.  We call this setting the bond version of FPP on
$\mathbb{Z}^d$.  However, unless otherwise stated, we will focus on
the site version of FPP on the triangular lattice $\mathbb{T}$,
which is defined precisely in the following, and the reason will be
explained later. The classic results of FPP are mainly stated for
the bond version of FPP on $\mathbb{Z}^d$, but most of them also
hold for the site version of FPP on $\mathbb{T}$.  Unless otherwise
stated, we just state them directly for the latter in this paper.

Let $\mathbb{T}=(\mathbb{V},\mathbb{E})$ denote the triangular
lattice , where $\mathbb{V}$ is the set of sites, and $\mathbb{E}$
is the set of bonds, connecting adjacent sites.  Let
$\{t(v):v\in\mathbb{V}\}$ be an i.i.d. family of non-negative random
variables with common distribution function $F$. A path is a
sequence of distinct sites connected by nearest neighbor bonds.  A
circuit is a path which starts and ends at the same site and does
not visit the same site twice, except for the starting site.
Sometimes we see the circuit as a simple closed curve consisting of
bonds of $\mathbb{E}$. Given a path $\gamma$, we define its
\textbf{passage time} as
$$T(\gamma):=\sum_{v\in \gamma}t(v).$$
The passage time between two site sets $A,B$ is defined as
$$T(A,B):=\inf \{T(\gamma):\gamma \mbox{ is a path connecting some
site of $A$ with some site of $B$}\},$$ and a time minimizing path
between $A,B$ is called a \textbf{geodesic}.  Denote the origin by
$\textbf{0}$.  Define
\begin{align*}
a_{0,n}&:=\inf\{T(\gamma):\gamma\mbox{ is a path from $\textbf{0}$
to $(n,0)$}\},\\
b_{0,n}&:=\inf\{T(\gamma):\gamma\mbox{ is a path from $\textbf{0}$
to $\{(x,y):x\geq n\}$}\}.
\end{align*}
These are called the point to point and point to line passage times
respectively.  It is well known (Kingman \cite{r9}, Wierman and Reh
\cite{r10}) that if $Et(v)<\infty$, there is a nonrandom constant
$\mu=\mu(F)<\infty$ such that
\begin{equation}\label{e16}
\lim_{n\rightarrow\infty}\frac{a_{0,n}}{n}=\lim_{n\rightarrow\infty}\frac{b_{0,n}}{n}=\mu~~a.s.\mbox{
and in }L^1,
\end{equation}
where $\mu$ is called the \textbf{time constant}.  Kesten \cite{r6}
showed that
\begin{equation}\label{e15}
\mu=0~~\mbox{iff }F(0)\geq p_c(\mathbb{T},\mbox{site})=\frac{1}{2},
\end{equation}
where $p_c(\mathbb{T},\mbox{site})$ is the critical probability for
site percolation on $\mathbb{T}$.  Since there is a transition of
the time constant at $F(0)=p_c$, Kesten and Zhang \cite{r1} call
this ``critical'' FPP.

In this paper, we shall restrict ourselves to a special critical
FPP, that is, we assume that
\begin{equation}\label{e30}
P(t(v)=0)=P(t(v)=1)=\frac{1}{2}.
\end{equation}

Note that we can view this model as the critical site percolation on
$\mathbb{T}$.  Recall that it can be obtained by coloring the faces
of the honeycomb lattice randomly, each cell being open (black) or
closed (white) with probability 1/2 independently of the others. For
this critical FPP, from (\ref{e15}) it is natural to ask whether or
not the sequences in (\ref{e16}) converge to positive limits after
properly normalizing.  We give a historical note related to this
problem here.  Let $\theta$ stands for $a$ or $b$.  In a survey
paper \cite{r4} (see the paragraph right below (3.16P) in
\cite{r4}), Kesten pointed out that the results proved in \cite{r5}
that $E\theta_{0,n}$ lies between two positive multiples of $\log n$
would imply that $\{\theta_{0,n}/\log n\}$ is a tight family,
furthermore, using RSW and FKG, one may show that
$P(\theta_{0,n}\leq \varepsilon \log n)$ is small for small
$\varepsilon$, which implies that any limit distribution of
$\theta_{0,n}/\log n$ has no mass at zero.  Later, Kesten and Zhang
\cite{r1} indicated that the estimates they developed in their paper
can be used to prove a strong law of large numbers (SLLN) for
$b_{0,n}$: $b_{0,n}/Eb_{0,n}\rightarrow 1~~a.s.$ Further, they
expected that $Eb_{0,n}/\log n$ and $Ea_{0,n}/\log n$ converge to
finite, strictly positive limits as $n\rightarrow \infty$. In this
paper, we continue the study from \cite{r1}, the following is our
main theorem:

\begin{theorem}\label{th1}
For the critical FPP satisfying (\ref{e30}) on $\mathbb{T}$, there
exists a constant $0<\mu<\infty$, such that
\begin{align}
&\lim_{n\rightarrow\infty}\frac{a_{0,n}}{\log n}=\mu~~\mbox{in
probability,}\label{th11}\\
&\lim_{n\rightarrow\infty}\frac{b_{0,n}}{\log
n}=\frac{\mu}{2}~~a.s.\label{th12}
\end{align}
Furthermore, the convergence in (\ref{th11}) does not occur almost
surely.
\end{theorem}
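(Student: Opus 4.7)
The passage time $t(v)\in\{0,1\}$ turns the model into critical site percolation on $\mathbb{T}$ if we call $v$ black when $t(v)=0$ and white when $t(v)=1$. Since $\mathbb{T}$ is self-matching, a Menger-type min-cut/max-flow duality shows that $a_{0,n}$ equals the maximum number $M_a(n)$ of pairwise vertex-disjoint white circuits in $\mathbb{T}$ separating $\mathbf{0}$ from $(n,0)$, and $b_{0,n}$ equals the maximum number $M_b(n)$ of pairwise disjoint white arcs in $\{x<n\}$ with both endpoints on the line $x=n$ that enclose $\mathbf{0}$. The theorem thus reduces to counting white circuits and arcs.

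\textbf{Expectation via the scaling limit.} Camia--Newman gives convergence of the full collection of percolation interfaces to $\mathrm{CLE}_6$. Combined with known $\mathrm{CLE}_6$ estimates, the expected number of loops around a fixed interior point of a simply connected domain with diameters in $[r,R]$ equals $c_0\log(R/r)+O(1)$ for an explicit constant $c_0>0$. Its half-plane analogue, counting chordal arcs that separate a fixed interior point from the straight boundary at distance $R$, equals $\frac{1}{2}c_0\log(R/r)+O(1)$, the factor $\frac{1}{2}$ reflecting that each arc is ``half a loop.'' This yields $EM_a(n)=\mu\log n+O(1)$ and $EM_b(n)=(\mu/2)\log n+O(1)$ with $\mu:=c_0>0$.

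\textbf{Concentration and the two modes of convergence.} Writing the counts as sums over disjoint dyadic annuli $A_k:=A(2^k,2^{k+1})$ for $1\leq k\leq\lfloor\log_2 n\rfloor$, each summand is uniformly bounded in $L^2$ by RSW, and the Camia--Newman coupling makes summands at distant indices asymptotically independent. One then gets an $L^2$ (hence in-probability) law of large numbers for both $M_a$ and $M_b$. For $b_{0,n}$ the count $M_b(n)$ is non-decreasing in $n$ in a natural coupling, so Borel--Cantelli on a suitable sparse subsequence combined with monotone interpolation upgrades convergence in probability to the almost sure statement (\ref{th12}). For $a_{0,n}$, $M_a(n)$ is \emph{not} monotone: as $n$ grows, $(n,0)$ crosses in and out of white circuits around $\mathbf{0}$, each crossing changing $M_a$ by $\pm 1$, and by the scaling limit the ``lost circuit'' event has positive probability at every scale uniformly in $n$. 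Borel--Cantelli along a sparse subsequence then yields positive random fluctuations in $a_{0,n}/\log n - \mu$, giving the a.s.\ non-convergence claimed in the theorem.

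\textbf{Main obstacle.} The hardest step, in my view, is the passage from the Camia--Newman weak scaling limit to sharp $\log n$ asymptotics for the \emph{discrete} counts $M_a(n)$ and $M_b(n)$ with errors uniform enough to drive the concentration and Borel--Cantelli steps above; identifying the constant $\frac{1}{2}c_0$ for the half-plane arcs in particular requires a careful half-plane $\mathrm{CLE}_6$ computation combined with quantitative RSW inputs. Once these are in place, the remainder is a fairly standard concentration pipeline.
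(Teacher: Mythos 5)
Your representation of $a_{0,n}$ as the maximal number of disjoint white circuits separating $\mathbf{0}$ from $(n,0)$ is fine (this is used in the paper too, up to $\pm 2$). But the step ``Expectation via the scaling limit'' contains an error and hides the main difficulty. You identify the constant $\mu$ with the loop-counting constant $c_0$ that governs the expected number of nested $\mathrm{CLE}_6$ loops in an annulus (Schramm--Sheffield--Wilson give $c_0=1/(\sqrt{3}\pi)$). That constant controls $a'_{0,n}$, the number of cluster boundary loops separating the two points, \emph{not} the maximal number of disjoint closed (white) circuits. These are different quantities: multiple disjoint white circuits can sit between two consecutive cluster boundary loops, and white circuits need not follow interfaces at all. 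In the paper it is explicitly remarked that the constant $\mu$ in Theorem~\ref{th1} is \emph{not} explicit (only a lower bound $\mu>1/(2\sqrt3\pi)$ is obtainable), while the loop count has explicit limit; conflating them yields the wrong constant and a false statement.

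More seriously, ``known $\mathrm{CLE}_6$ estimates'' do not directly give asymptotics for the discrete max-circuit count. The core of the paper is to (i) construct a continuum analogue of the passage time as an infimum over chains of $\mathrm{CLE}_6$ loops, (ii) prove a strong law for it along dyadic scales using the subadditive ergodic theorem together with scale invariance and a mixing argument, and (iii) show, via half-plane three-arm and plane six-arm exponent bounds, that the discrete annular passage times converge in law to this continuum chain passage time, which is what feeds the Ces\`aro argument for $E c_n/\log n$. None of this is ``standard'' and it is precisely what you labelled the ``main obstacle'' and then left untouched; without it you have no well-defined continuum object and no identification of the limit.

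\textbf{Secondary issues.} Your concentration step sketches an $L^2$-LLN via asymptotic independence of annular contributions, whereas the paper runs Kesten--Zhang's martingale decomposition with fourth-moment bounds to get the a.s.\ law $c_n/Ec_n\to 1$; an $L^2$ bound alone gives convergence in probability, and upgrading to a.s.\ for $b_{0,n}$ needs the quantitative tail bounds you haven't supplied. Also, (\ref{th11}) does not follow from concentration of $a_{0,n}$ alone: you need the decomposition of $a_{0,n}$ into two essentially independent half-passage times $T(\mathbf{0},\partial B(n/2))+T((n,0),\partial B((n,0),n/2))$ (Kesten--Zhang's (2.84)) to produce the factor $2$ relating $\mu$ to $\mu/2$. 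Finally, your argument for a.s.\ non-convergence of $a_{0,n}/\log n$ gestures correctly at the mechanism (along a subsequence $a_{0,n}$ coincides with $c_{2^{q}}$, whose normalized limit is $\mu/2\neq\mu$), but as written it does not actually exhibit a subsequence with a different limit; asserting ``positive random fluctuations'' is not enough.

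\textbf{Summary.} The high-level architecture (circuit-count representation, dyadic annuli, scaling limit, concentration, Borel--Cantelli, monotonicity) is sound in spirit, but the explicit identification of the constant is wrong, and the central technical content---defining and analyzing the continuum passage time on $\mathrm{CLE}_6$ and proving discrete-to-continuum convergence of the annular passage times---is absent.
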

\begin{remark}
In fact, using our method one can easily generalize Theorem
\ref{th1} to point to point and point to line passage times along
any given direction, and the limits will coincide with the theorem,
since Camia and Newman's full scaling limit (see Section \ref{s1}
below) is invariant under rotations.  Furthermore, it is expected
that the theorem holds for the classic bond version of FPP on
$\mathbb{Z}^2$. Once the existence of full scaling limit of critical
bond percolation on $\mathbb{Z}^2$ is established, one may derive
the theorem by our strategy.  Also, the limits will be the same as
Theorem \ref{th1} because of the conjectural universality of
critical percolation.
\end{remark}

\begin{remark}
One may consider more general critical FPP on $\mathbb{T}$,  for
example, the distribution function $F$ satisfying the conditions
(1.4)--(1.6) in \cite{r1}.  That is,
\begin{align*}
P(t(v)=0)=\frac{1}{2}, E[t^{\delta}(v)]<\infty\mbox{ for some
}\delta>4, P(0<t(v)<C_0)\mbox{ for some }C_0>0.
\end{align*}
It is expected that Theorem \ref{th1} still holds for the $F$ above
(with $\mu(F)$ as a function of $F$).
\end{remark}

For each $r>0$, let $\mathbb{D}_r$ denote the Euclidean disc of
radius $r$ centered at \textbf{0} and $\partial\mathbb{D}_r$ denote
its boundary.  Let $\mathbb{D}$ denote the unit disk for short.  For
$v\in \mathbb{V}$, let $B(v,r)$ denote the discrete ball of radius
$r$ centered at $v$ in the triangular lattice:
$$B(v,r):=\mathbb{V}\cap\{v+\overline{\mathbb{D}}_r\}.$$
We denote by $\partial B(v,r)$ its boundary, which is the set of
sites in $B(v,r)$ that have at least one neighbor outside $B(v,r)$.
For short, we let $B(r):=B(\textbf{0},r)$.
\begin{remark}
We can express $a_{0,n}$ and $b_{0,n}$ in terms of circuits. For
example, it is easy to see that $a_{0,n}$ and the maximum number of
disjoint closed circuits which separate $\textbf{0}$ and $(n,0)$
differ by at most 2.  Note that with probability 1 there is no
infinite cluster for the critical site percolation on $\mathbb{T}$,
therefore the cluster boundaries form loops.  Now we introduce two
quantities for this model, which are similar as $a_{0,n}$ and
$b_{0,n}$ respectively:
\begin{align*}
a'_{0,n}&:=\mbox{the number of loops which separate $\textbf{0}$ and
$(n,0)$},\\
b'_{0,n}&:=\mbox{the number of loops which separate $\textbf{0}$ and
$\{(x,y):x\geq n\}$}.
\end{align*}
Note that $a'_{0,n}$ is essentially introduced in \cite{r7}.  Using
the strategy in the present paper and the result of \cite{r8}, one
may get the following result, which is analogous to Theorem
\ref{th1} but with explicit limit values:
\begin{align}
\lim_{n\rightarrow\infty}\frac{a'_{0,n}}{\log n}&=\frac{1}{\sqrt{3}\pi}~~\mbox{in probability,}\label{e17}\\
\lim_{n\rightarrow\infty}\frac{b'_{0,n}}{\log
n}&=\frac{1}{2\sqrt{3}\pi}~~a.s.\notag
\end{align}
Furthermore, the convergence in (\ref{e17}) does not occur almost
surely.

The explicit limits above mainly relies on the work of \cite{r8}.
However, it seems very hard to give the explicit value of $\mu$ in
Theorem \ref{th1}. Nevertheless, it need not much work to deduce
that $\mu>1/(2\sqrt{3}\pi)$ from above.  We just give a sketch of
the proof here.  First, let us introduce some notations from
\cite{r8}. Camia and Newman defined the conformal loop ensemble
CLE$_6$ in $\mathbb{D}$ (see Section 3.2 in \cite{r2}), which is
almost surely a countably infinite collection of (oriented)
continuum nonsimple loops and is the scaling limit of the cluster
boundaries of critical site percolation on $\eta\mathbb{T}\cap
\mathbb{D}$ with monochromatic boundary conditions.  We inductively
define $L_k$ to be the outermost loop surrounding $\textbf{0}$ in
$\mathbb{D}$ when the loops $L_1,\ldots,L_{k-1}$ are removed.  Note
that the loops $L_k$ exist for all $k\geq 1$ with probability 1.
Define $A_0=\mathbb{D}$ and let $A_k$ be the component of
$\mathbb{D}\backslash L_k$ that contains $\textbf{0}$.  If $D$ is a
simply connected planar domain and $\textbf{0}\in D$, the conformal
radius of $D$ viewed from $\textbf{0}$ is defined to be
$\mbox{CR}(D):=|g'(\textbf{0})|^{-1}$, where $g$ is any conformal
map from $D$ to $\mathbb{D}$ that sends \textbf{0} to \textbf{0}.
For $k\geq 1$, define
$$B_k:=\log\mbox{CR}(A_{k-1})-\log\mbox{CR}(A_k).$$
From Proposition 1 in \cite{r8}, we know that $B_k,k\geq 1$ are
i.i.d random variables. Furthermore, it is shown (see (2) in
\cite{r8}) that
$$E[B_k]=2\sqrt{3}\pi.$$
A well known consequence of the Schwarz Lemma and the Koebe $1/4$
Theorem (see e.g., Lemma 2.1 and Theorem 3.17 in \cite{r16}, see
also (2.1) in \cite{r17} for a similar application) is that
\begin{equation*}
\frac{\mbox{CR}(A_n)}{4}\leq \mbox{dist}(\textbf{0},L_n)\leq
\mbox{CR}(A_n).
\end{equation*}
Let $N(\varepsilon)$ be the number of loops surrounding \textbf{0}
in $\mathbb{D}\backslash \mathbb{D}_{\varepsilon}$.  From the
definition it is clear that $\log\mbox{CR}(A_n)=-\sum_{k=1}^{n}B_k$.
Combining above issues, one may conclude
\begin{equation}\label{e18}
\lim_{\varepsilon\rightarrow 0}-\frac{N(\varepsilon)}{\log
\varepsilon}=\lim_{\varepsilon\rightarrow
0}-\frac{EN(\varepsilon)}{\log
\varepsilon}=\frac{1}{2\sqrt{3}\pi}~~a.s.
\end{equation}
(\ref{e18}) is an analog of Lemma \ref{l3} below.  One can get
analogs of Lemma \ref{l1} and Lemma \ref{l2} following our method.
Combining these issues, the result would be proved.
\end{remark}
\begin{remark}
Consider the oriented (directed) FPP on $\mathbb{Z}^2$ (see e.g.,
Section 12.8 in \cite{r18} for background). We assign independently
to each bond $e$ i.i.d. passage time $t(e)$. Let $\vec{p}_c$ denote
the critical probability for oriented bond percolation on
$\mathbb{Z}^2$. Assume
$$P(t(e)=0)=\vec{p}_c,~~P(t(e)=1)=1-\vec{p}_c.$$  We denote by
$\vec{T}(\textbf{0},(r,\theta))$ the passage time from \textbf{0} to
$(\lfloor r\sin\theta\rfloor,\lfloor r\cos\theta\rfloor)$ by a
northeast path for $(r,\theta)\in\mathbb{R}^+\times[0,\pi/2]$. Based
on Conjecture 4 in \cite{r11}, we conjecture that there is a
constant $0<\vec{\mu}<\infty$, such that as $r\rightarrow\infty$,
$$\frac{\vec{T}(\textbf{0},(r,\pi/4))}{\log r}\rightarrow\vec{\mu}~~\mbox{in probability.}$$
\end{remark}
\begin{remark}
Camia and Newman's full scaling limit plays a central role in the
proof of Theorem \ref{th1}.  We want to note that the scaling limit
of a critical system may help to show laws of large numbers for many
different variables.  For example, consider the largest winding
angle (the interested reader is referred to \cite{r25} for a more
general discussion and references of winding angles)
$\theta_{max,n}$ of the paths from $\textbf{0}$ to $\partial B(n)$
in Kesten's incipient infinite cluster (IIC) \cite{r24}.
Heuristically, once the existence of an appropriate scaling limit of
IIC on $\mathbb{T}$ is established, one may derive a SLLN for
$\theta_{max,n}$ by our strategy.
\end{remark}
\emph{Idea of the Proof.}  We show a SLLN for
$c_n:=T(\textbf{0},\partial B(n))$, that is, $c_n/\log n\rightarrow
\mu/2~~a.s.$, then Theorem \ref{th1} follows from this easily.
First, using the estimates developed by Kesten and Zhang \cite{r1},
we prove that $c_n/Ec_n\rightarrow 1~~a.s.$  Next, we want to show
$Ec_n/\log n\rightarrow \mu/2,$ which implies the required SLLN
immediately.  For this, we divide the discrete ball $B(n)$ into long
annuli, which have the same shape.  The summation of the passage
times of these annuli approximates $c_n$.  Inspired by Beffara and
Nolin \cite{r3}, we express the passage time of an annulus in terms
of the collection of cluster interfaces (see Fig. \ref{fig2}).  When
the annulus is very large, this quantity can be approximated well by
the passage time defined analogously for the corresponding annulus
with respect to Camia and Newman's full scaling limit \cite{r2} (see
Fig. \ref{fig1}).  For this scaling limit, by the subadditive
ergodic theorem we get a SLLN for the passage times of annuli, which
can be used to approximate the passage times of the large and long
annuli for the discrete model.

Throughout this paper, $C, C_1, C_2, \ldots$ denote positive finite
constants that may change from line to line or page to page
according to the context.
\section{Preliminary results}\label{s1}
We shall use some estimates developed in \cite{r1}.  Let us give
some notations from \cite{r1}.  For $0<m<n$, define the annulus
$$A(m,n):=B(n)\backslash B(m).$$
Let $A(p):=A(2^{p},2^{p+1}),~~p\geq 0.$  Next define for $p\geq 0$
\begin{align*}
m(p)&:=\inf\{t\in\{p,p+1,\ldots\}:A(t)\mbox{ contains an open
circuit surrounding \textbf{0}}\},\\
\mathcal {C}_p&:=\mbox{innermost open circuit surrounding \textbf{0}
in }A(m(p)),\mbox{ and set }\mathcal {C}_{-1}:=\emptyset.
\end{align*}
For $p\geq 0$, define
$$\overline{\mathcal {C}}_p:=\mathcal {C}_p\cup\mbox{interior sites of }\mathcal {C}_p,~~~~\mathcal {F}_p:=\{\mbox{$\sigma$-field generated by} \{t(v):v\in \overline{\mathcal {C}}_p\}\},$$
and let $\mathcal {F}_{-1}$ be the trivial $\sigma$-field.  Let
$\Delta_p=\Delta_{p,q}:=E[T(\textbf{0},\mathcal {C}_{m(q)})|\mathcal
{F}_p]-E[T(\textbf{0},\mathcal {C}_{m(q)})|\mathcal {F}_{p-1}]$.
Then we can write
\begin{equation}\label{e19}
T(\textbf{0},\mathcal {C}_{m(q)})-ET(\textbf{0},\mathcal
{C}_{m(q)})=\sum_{p=0}^{q}\Delta_p.
\end{equation}

Essentially the same as the proof of (2.28), (2.29) in \cite{r1},
one may get the following lemma, we omit the proof here.  Note that
(\ref{e7}) is stronger than (2.29) in \cite{r1}, since the
distribution of the passage time in \cite{r1} is more general than
ours. One needs no new technique here.
\begin{lemma}\label{l4}
There exist constants $C_1,C_2,C_3>0$ such that for $q\geq 1$
\begin{equation}\label{e6}
P(m(p)-p\geq t)\leq \exp(-C_1t),~~t,p\geq 0,
\end{equation}
\begin{equation}\label{e7}
P(|\Delta_p|\geq x)\leq C_2\exp(-C_3x),~~x\geq 0,0\leq p\leq q.
\end{equation}
\end{lemma}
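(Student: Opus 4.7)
The plan is to follow the RSW-based template of Kesten and Zhang \cite{r1}, taking advantage of our bounded $\{0,1\}$-valued passage times; the sole analytic input in both bounds is RSW at $p_c=1/2$.

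For (\ref{e6}), I would argue directly. The event $\{m(p)-p\geq t\}$ says that none of the pairwise disjoint dyadic annuli $A(p), A(p+1), \ldots, A(p+t-1)$ contains an open circuit surrounding $\textbf{0}$. By RSW there is an absolute constant $c_0>0$ with
$$P\bigl(A(k)\text{ contains an open circuit surrounding }\textbf{0}\bigr)\geq c_0$$
for every $k\geq 0$. Since the annuli are disjoint these events are independent, so $P(m(p)-p\geq t)\leq (1-c_0)^t$, which gives (\ref{e6}) with $C_1=-\log(1-c_0)$.

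For (\ref{e7}) the plan is to use the resampling form of the martingale-increment representation. Let $T'=T(\textbf{0},\mathcal{C}_{m(q)})$ be computed after independently resampling the passage times on the ``new layer'' revealed in passing from $\mathcal{F}_{p-1}$ to $\mathcal{F}_p$. Then $\Delta_p=E[\,T-T'\mid \mathcal{F}_p\,]$, so $|\Delta_p|\leq E[\,|T-T'|\mid \mathcal{F}_p\,]$. One then bounds $|T-T'|$ by the crossing passage time through the annular shell between $\mathcal{C}_{p-1}$ and $\mathcal{C}_p$, plus its resampled analogue, since the contributions from outside $\mathcal{C}_p$ and from inside $\mathcal{C}_{p-1}$ agree in the two configurations once the coupling leaves $\mathcal{C}_p$ invariant. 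Because (\ref{e6}) shows that this shell lies, with exponentially tight probability, inside a bounded number of dyadic sub-annuli, and because RSW provides, in each such sub-annulus, an open circuit along which any crossing path can be shortcut at zero cost, the crossing passage time through the shell has an exponential tail uniformly in $p$ and $q$. This yields (\ref{e7}).

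The main obstacle in making this rigorous is the measurability bookkeeping for the coupling: since the regions $\overline{\mathcal{C}}_{p-1}\subset \overline{\mathcal{C}}_p$ are random and entangled with the positions of the RSW-produced open circuits, one must set up the resampling so as to preserve $\mathcal{C}_p$ itself, as well as the correct conditional law given $\mathcal{F}_p$. The natural way is to peel the configuration from infinity inward layer by layer, and to resample only strictly between $\mathcal{C}_{p-1}$ and $\mathcal{C}_p$, never on the circuit $\mathcal{C}_p$. This is exactly the structure used by Kesten and Zhang in their proof of their (2.29); our simpler passage-time law only streamlines the concentration step, so their argument goes through essentially verbatim.
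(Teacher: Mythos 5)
Your argument for (\ref{e6}) is correct and is the standard one: the annuli $A(p),\dots,A(p+t-1)$ are pairwise disjoint, RSW gives a uniform lower bound $c_0>0$ on the probability that each contains an open circuit around $\mathbf{0}$, and independence yields $P(m(p)-p\geq t)\leq(1-c_0)^t$.

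For (\ref{e7}) there is a genuine gap in the resampling representation. The identity $\Delta_p=E[T-T'\mid\mathcal{F}_p]$ requires $E[T'\mid\mathcal{F}_p]=E[T\mid\mathcal{F}_{p-1}]$, hence $E[T'\mid\mathcal{F}_p]$ must reduce to an $\mathcal{F}_{p-1}$-measurable quantity. But if the coupling leaves $\mathcal{C}_p$ invariant (which you need so that the contribution from outside $\mathcal{C}_p$ is the same for $T$ and $T'$), then $E[T'\mid\mathcal{F}_p]$ still depends on the random circuit $\mathcal{C}_p$ through the exterior term $E\bigl[T(\mathcal{C}_p,\mathcal{C}_{m(q)})\mid\mathcal{C}_p\bigr]$, and $\mathcal{C}_p$ is not $\mathcal{F}_{p-1}$-measurable, so the identity fails. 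If instead you regenerate $\mathcal{C}_p$ from fresh randomness so that the martingale identity does hold, then the two configurations no longer share a common open circuit at level $p$, and your bound of $|T-T'|$ by the crossing time through a single shell between $\mathcal{C}_{p-1}$ and $\mathcal{C}_p$ no longer applies. The paper omits the proof, citing (2.28)--(2.29) of Kesten--Zhang \cite{r1}; that argument does not proceed by a resampling coupling, but by the exact decomposition $T(\mathbf{0},\mathcal{C}_{m(q)})=T(\mathbf{0},\mathcal{C}_p)+T(\mathcal{C}_p,\mathcal{C}_{m(q)})$ across the open (zero-cost) circuit $\mathcal{C}_p$, controlling the interior increment via the exponential tail of $T(\mathcal{C}_{p-1},\mathcal{C}_p)$ (your (\ref{e6}) plus a Corollary~\ref{c1}--type bound on crossing times), and, crucially, a separate estimate showing that the conditional expectation of the exterior piece is stable as a function of the circuit $\mathcal{C}_p$. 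Your outline covers the first ingredient but not the second, which is where most of the work in \cite{r1} actually lives.
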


Define $R(m,n):=\{v\in \mathbb{V}:|\arg(v)|<\frac{\pi}{10}\}\cap
A(m,n)$. We say a path $\gamma\subset R(m,n)$ is a crossing path in
$R(m,n)$ if the endpoints of $\gamma$ lie adjacent (Euclidean
distance smaller than $1$) to the rays of argument
$\pm\frac{\pi}{10}$ respectively. By step 3 of the proof of Theorem
5 in \cite{r3}, we obtain the following lemma.

\begin{lemma}\label{l5}
There exist constants $C_1,C_2,K_0>0$, such that for all $k>K_0$ and
$n>m>0$,
\begin{align*}
&P(\mbox{there exist $k\log(n/m)$ disjoint closed crossing
paths in $R(m,n)$})\\
&\quad\leq C_1\exp(-C_2k\log(n/m)).
\end{align*}
\end{lemma}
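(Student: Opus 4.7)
\emph{Proof plan.}
The plan is to dyadically decompose $R(m,n)$ in the log-radial direction into $J\sim\log(n/m)$ sub-sectors $R_j$, each of bounded aspect ratio in log-polar coordinates, and then on each $R_j$ combine the BK inequality with RSW to bound the number of disjoint closed sub-paths that $N$ disjoint closed crossings of $R(m,n)$ must induce there. This follows the approach of step~3 of the proof of Theorem~5 in~\cite{r3}.

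Set $N:=\lceil k\log(n/m)\rceil$ and $J:=\lfloor\log_2(n/m)\rfloor$, and let $R_j:=R(2^jm,2^{j+1}m)\cap\{v:|\arg v|<\pi/10\}$ for $j=0,\ldots,J-1$. The $R_j$ are pairwise disjoint, and the RSW estimates on $R_j$ hold with constants independent of $j,m,n$. Given $N$ disjoint closed crossings $\gamma_1,\ldots,\gamma_N$ of $R(m,n)$, each $\gamma_i$ must meet the midline $\{v:\arg v=0,\,|v|\in[m,n]\}$ at some site $v_i$, and these $v_i$ are distinct. Writing $n_j:=|\{i:v_i\in R_j\}|$, we have $\sum_j n_j=N$; the connected component $\delta_i^{(j)}$ of $\gamma_i\cap R_j$ that contains $v_i$ is a closed path in $R_j$ with both endpoints on the four-arc boundary of $R_j$, and distinct $\delta_i^{(j)}$'s are pairwise disjoint.

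By the BK inequality combined with RSW and a pigeonhole over the $\binom{4}{2}=6$ possible unordered pairs of boundary arcs on which the endpoints of $\delta_i^{(j)}$ can lie, one obtains constants $C_0,c_0>0$, independent of $j,m,n$, such that
\[
P\bigl(R_j\text{ contains $n_j$ disjoint closed arc-to-arc paths}\bigr)\leq C_0\exp(-c_0 n_j).
\]
Since the $R_j$ are pairwise disjoint, these events are independent across $j$; multiplying over $j$, and then union-bounding over the at most $\binom{N+J-1}{J-1}\leq (Ck)^J$ partitions $(n_0,\ldots,n_{J-1})$ of $N$ into nonnegative integers (for an absolute constant $C$), we obtain
\[
P\bigl(\text{$N$ disjoint closed crossing paths in }R(m,n)\bigr)\leq (CC_0k)^J\exp(-c_0 N).
\]
Choosing $K_0$ so large that $\log(CC_0 k)\leq (c_0\log 2)\,k/2$ for all $k>K_0$, the right-hand side is bounded above by $\exp(-C_2\,k\log(n/m))$ for an appropriate $C_2>0$, which gives the claimed estimate.

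The main obstacle is verifying the per-sector estimate, in particular for arc-pair types corresponding to two \emph{adjacent} boundary arcs of $R_j$ (``corner'' closed crossings). For opposite-arc pairs (upper--lower ray, or inner--outer arc), the dual transverse crossing in $R_j$ is a standard RSW event with probability bounded below by some $c>0$, so $P(\text{one closed arc-to-arc path of that type})\leq 1-c$ and BK gives geometric decay in $n_j$. For adjacent-arc pairs no such direct duality is available, and in fact the probability of a single corner crossing can be close to $1$; however, the sub-path $\delta_i^{(j)}$ is forced to pass through the interior midline site $v_i$, ruling out degenerate corner crossings that hug a single corner and allowing the required bound to be recovered by the RSW arm-extension and gluing construction used in step~3 of~\cite{r3}.
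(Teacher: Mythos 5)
The overall multi-scale framework you propose (dyadic log-radial decomposition, a per-sector estimate, independence across sub-sectors, union bound over partitions, and absorbing the combinatorial factor by taking $k>K_0$) is the right skeleton and matches the kind of argument the paper is pointing to in \cite{r3}; the counting arithmetic in the last step is also fine. However, the per-sector estimate is where the proof actually lives, and your version of it has a real gap that is not repaired by the sentence you offer in its place.

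First, as you state it, $P(R_j\mbox{ contains }n_j\mbox{ disjoint closed arc-to-arc paths})\leq C_0\exp(-c_0 n_j)$ is simply false: a single closed site adjacent to a boundary arc is already an arc-to-arc path, the boundary of $R_j$ contains order $2^jm$ sites, and about half of them are closed, so the left side is essentially $1$ for $n_j$ up to a constant times $2^jm$. You therefore must use, and not merely mention, the constraint that each $\delta_i^{(j)}$ passes through a midline site. Second, even with that constraint, your pigeonhole over the $\binom{4}{2}=6$ arc \emph{pairs} does not exhaust the cases: $\delta_i^{(j)}$ can have \emph{both} endpoints on the same arc. The dangerous case is both endpoints on the inner circular arc (or both on the outer one). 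Since the midline $\{\arg v=0\}\cap R_j$ actually meets the inner and outer circular arcs, a midline site $v_i$ at radius just above $2^jm$ can support a $\delta_i^{(j)}$ that is a tiny closed hook back to the inner arc; the ``interior midline site rules out degenerate corner crossings'' claim is not true near the two radial ends of the midline. For the adjacent-arc pairs involving a ray, your idea does work (the sub-arm from $v_i$ to the ray must cross the half-sector $R_j\cap\{0<\arg<\pi/10\}$ or its mirror, a bounded-aspect-ratio region, so RSW gives $\leq 1-c$); but the same-arc-pair case is left untreated, and the hand-wave toward an unspecified ``RSW arm-extension and gluing construction'' does not constitute an argument for it.

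A way to close this gap is to stop using the disjoint windows $R_j$ for the per-scale estimate and instead examine the component of $\gamma_i$ inside an \emph{enlarged} window such as $R(2^{j-1}m,2^{j+2}m)\cap\{|\arg v|<\pi/10\}$, so that $v_i$ is separated from every boundary arc of its window by a distance comparable to $2^jm$; then every $\delta_i$ has two macroscopic closed arms, each arm crosses a fixed-aspect-ratio sub-rectangle, RSW applies to each type, and BK (applied once to the whole disjoint family $\{\delta_i\}_i$, not sector-by-sector) gives the geometric decay. But this is a genuinely different decomposition: the windows overlap, so the ``independence across $j$'' step of your proof no longer holds and must be replaced by a single global BK application together with a reorganized combinatorial count. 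In short: the architecture is right, the per-sector bound as written is false, the same-arc case is missing, and the fix changes the structure of the middle of the proof rather than being a routine detail.
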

Observe that $T(\partial B(m),\partial B(n))$ equals the maximal
number of disjoint closed circuits which surround $\textbf{0}$ in
$A(m-1,n)$ (see (2.39) and the Appendix in \cite{r1}), from Lemma
\ref{l5} we immediately get:
\begin{corollary}\label{c1}
There exist constants $C_1,C_2,K_0>0$, such that for all $k>K_0$ and
$n>m>0$,
\begin{equation*}
P(T(\partial B(m),\partial B(n))\geq k\log(n/m))\leq
C_1\exp(-C_2k\log(n/m)).
\end{equation*}
\end{corollary}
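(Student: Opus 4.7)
The corollary is a short deduction from Lemma \ref{l5} via the circuit-passage-time identity recorded just above the statement. The plan: on the event $\{T(\partial B(m),\partial B(n))\geq k\log(n/m)\}$, the identity tells us that the annulus $A(m-1,n)$ contains at least $k\log(n/m)$ (up to an additive constant that I absorb into $K_0$) mutually disjoint closed circuits surrounding $\mathbf{0}$.

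Each such circuit must cross the angular wedge $R(m-1,n)=\{v:|\arg v|<\pi/10\}\cap A(m-1,n)$, since surrounding the origin forces it to traverse both rays of argument $\pm\pi/10$. Restricting each circuit to $R(m-1,n)$ therefore yields at least one closed crossing path of the wedge in the sense required by Lemma \ref{l5}, and crossing paths coming from distinct circuits remain disjoint. Consequently the event in question is contained in the event that $R(m-1,n)$ carries at least $k\log(n/m)$ disjoint closed crossing paths, and Lemma \ref{l5}, applied with its $m$ replaced by $m-1$ (or by $1$ in the degenerate case $m=1$) and its $k$ replaced by a comparable constant $k'$ with $k'\log(n/(m-1))\leq k\log(n/m)$, immediately furnishes the desired exponential bound after a relabelling of $C_1,C_2$.

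The only bookkeeping point is the comparison between $\log(n/m)$ and $\log(n/(m-1))$. When $n/m$ is bounded away from $1$ this ratio is uniformly bounded and can be absorbed into the exponential constant $C_2$; when $n/m$ is close to $1$ the quantity $k\log(n/m)$ is bounded, and the asserted bound is made trivial by enlarging $C_1$ and $K_0$. The additive constant in the passage-time/circuit identity is handled the same way.

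There is no genuine obstacle here: the corollary is essentially a one-line consequence of Lemma \ref{l5} together with the observation that circuits surrounding $\mathbf{0}$ restrict to crossings of any fixed angular sector. The only real work has already been done in \cite{r1,r3}, which supply the circuit identity and Lemma \ref{l5} respectively.
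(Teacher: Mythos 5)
Your argument is correct and follows the paper's route: invoke the identity $T(\partial B(m),\partial B(n)) = \{\text{maximal number of disjoint closed circuits surrounding }\mathbf{0}\text{ in }A(m-1,n)\}$, note that each such circuit restricts to a closed crossing of the angular sector, and apply Lemma~\ref{l5}. The paper treats the $m$ versus $m-1$ discrepancy as immediate (and in fact the identity from~\cite{r1} is an equality, so no additive slack is needed), but your explicit bookkeeping of the $\log(n/m)$ versus $\log(n/(m-1))$ comparison is a harmless elaboration of the same proof.
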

Note that (2.48) in \cite{r1} also implies this corollary.  Recall
$c_n=T(\textbf{0},\partial B(n))$.  By RSW, FKG and Corollary
\ref{c1}, one easily obtains the following well-known result, which
is (3.23) in \cite{r5}.
\begin{corollary}\label{c2}
There exist constants $C_1,C_2>0$, such that for all $n\geq 1$,
\begin{equation*}
C_1n\leq Ec_n\leq C_2n.
\end{equation*}
\end{corollary}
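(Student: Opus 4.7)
The plan is to prove the upper and lower bounds by separate arguments. For the upper bound $Ec_n \leq C_2 n$, the key observation is simply that each passage time satisfies $t(v)\in\{0,1\}$, so $c_n$ is bounded by the graph-length of any fixed nearest-neighbor path from $\mathbf{0}$ to $\partial B(n)$. Taking the straight path along a coordinate axis of $\mathbb{T}$, which uses at most $n+O(1)$ sites, we obtain $c_n \leq n+O(1)$ deterministically, and hence $Ec_n \leq C_2 n$ for a suitable constant $C_2$.

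For the lower bound $Ec_n \geq C_1 n$, I would begin from the circuit reformulation recorded just before Corollary \ref{c1}: any path from $\mathbf{0}$ to $\partial B(n)$ must meet every closed circuit surrounding $\mathbf{0}$ in $B(n)$, so $c_n$ dominates the maximal number of disjoint such closed circuits. The RSW theorem provides, uniformly in $p$, a closed circuit surrounding $\mathbf{0}$ in each dyadic annulus $A(p)=A(2^p,2^{p+1})$ with probability at least some $p_0>0$. Since disjoint annuli involve disjoint sets of sites, the corresponding circuit-existence indicators are independent, and FKG allows one to refine and combine such monotone events within a fixed annulus when a more robust version is needed.

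The upper bound is essentially trivial. The main obstacle is the lower bound: one has to extract a linear-in-$n$ lower bound on $Ec_n$ from the RSW/FKG ingredients by carefully packaging the circuit-existence events across the available scales, and using Corollary \ref{c1} to rule out atypically large contributions from few exceptional circuits. The whole argument is a routine application of the RSW+FKG scheme developed for critical site percolation on $\mathbb{T}$, already used implicitly in \cite{r5}; I do not expect conceptual difficulty beyond bookkeeping.
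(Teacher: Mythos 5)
There is a genuine problem here, rooted in taking the printed statement at face value. As printed, $C_1 n\leq Ec_n\leq C_2 n$ is a typo (or the subscript should read $c_{2^n}$): the quantity $c_n=T(\textbf{0},\partial B(n))$ grows only logarithmically at criticality --- that is the content of the whole paper, cf.\ (\ref{e10}) --- and the corollary is the statement (3.23) of \cite{r5} that $Ec_n$ lies between two positive multiples of $\log n$. This is exactly how it is invoked at the end of the proof of Lemma \ref{l1} (``$Ec_n$ lies between two positive multiples of $q$'' with $2^{q-1}\leq n\leq 2^q$). Your lower bound is therefore aimed at a false statement: $Ec_n\geq C_1 n$ cannot hold, since by Kesten's theorem (\ref{e15}) the time constant vanishes at $F(0)=1/2$, and indeed $Ec_n=O(\log n)$. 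Your own construction already shows why no ``careful packaging across the available scales'' can help: there are only about $\log_2 n$ disjoint dyadic annuli inside $B(n)$, each contributing a closed circuit with RSW probability bounded below, so the expected number of disjoint closed circuits surrounding $\textbf{0}$ --- which is what $c_n$ essentially equals --- is of order $\log n$, not $n$. That argument is in fact the correct one: it proves the intended lower bound $Ec_n\geq C_1\log n$, but it does not and cannot prove what you set out to prove.

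Symmetrically, the upper bound you prove, $c_n\leq n+O(1)$ by following a straight path, is true but far too weak to play the corollary's role in the paper. What is needed is $Ec_n\leq C_2\log n$, and this is where Corollary \ref{c1} (cited in the paper's one-line justification) enters: taking $m=1$ there gives $P(T(\partial B(1),\partial B(n))\geq k\log n)\leq C\exp(-ck\log n)$ for $k>K_0$, and summing this tail over $k$ yields $ET(\partial B(1),\partial B(n))\leq C\log n$, hence $Ec_n\leq C\log n+O(1)$. So the proof should be: lower bound via RSW and FKG producing independent closed circuits in the $\lfloor\log_2 n\rfloor$ dyadic annuli (essentially as you describe), upper bound via the tail estimate of Corollary \ref{c1}; and the statement itself should be read as $C_1\log n\leq Ec_n\leq C_2\log n$ for $n\geq 2$.
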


\begin{lemma}\label{l1}
\begin{equation*}
\lim_{n\rightarrow\infty}\frac{c_n}{Ec_n}=1~~a.s.
\end{equation*}
\end{lemma}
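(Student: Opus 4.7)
The plan is to combine the martingale decomposition (\ref{e19}) with the sub-exponential tail bound (\ref{e7}) to obtain concentration of $T(\mathbf{0},\mathcal{C}_{m(q)})$ about its mean, and then transfer this to $c_n$ by comparing $c_n$ with $T(\mathbf{0},\mathcal{C}_{m(q_n)})$ for $q_n:=\lceil\log_2 n\rceil$. The final step will be a routine subsequence/monotonicity argument to promote the conclusion from $n_k=2^k$ to all $n$.

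First I would control the gap between $c_n$ and $T(\mathbf{0},\mathcal{C}_{m(q_n)})$. Since $m(q_n)\geq q_n$, the circuit $\mathcal{C}_{m(q_n)}$ surrounds $\partial B(n)$, so $c_n\leq T(\mathbf{0},\mathcal{C}_{m(q_n)})\leq c_n+T(\partial B(n),\partial B(2^{m(q_n)+1}))$. Combining (\ref{e6}) (which gives $m(q_n)-q_n\leq C\log\log n$ except on an event of probability $\leq n^{-100}$) with Corollary \ref{c1} (applied to the annulus $A(n,2^{m(q_n)+1})$ on that event), the difference $T(\mathbf{0},\mathcal{C}_{m(q_n)})-c_n$ is $O((\log\log n)^2)$ off an event whose probability is summable along $n_k=2^k$, hence almost surely $o(\log n)$. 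Taking expectations yields $|ET(\mathbf{0},\mathcal{C}_{m(q_n)})-Ec_n|=o(\log n)$ as well.

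Next I would establish concentration of $T(\mathbf{0},\mathcal{C}_{m(q_n)})$ about its mean. By (\ref{e19}) the deviation is the martingale sum $\sum_{p=0}^{q_n}\Delta_p$, whose increments are uniformly sub-exponential by (\ref{e7}). A Bernstein-type inequality for martingales with bounded $\psi_1$-norm increments---or, concretely, truncation of each $\Delta_p$ at level $M\log n$ (the probability that any of the $O(\log n)$ increments exceeds this is at most $C(\log n)\,n^{-C_3 M}$ by (\ref{e7}) and a union bound) followed by Azuma--Hoeffding applied to the re-centred bounded martingale---yields
\[
P\!\left(\Bigl|\sum_{p=0}^{q_n}\Delta_p\Bigr|\geq \varepsilon\log n\right)\leq C n^{-c\varepsilon^{2}}.
\]
Along $n_k=2^k$ this bound is summable in $k$, so Borel--Cantelli combined with the previous paragraph and the fact $Ec_n\asymp\log n$ (Corollary \ref{c2}) gives $c_{n_k}/Ec_{n_k}\to 1$ almost surely. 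Monotonicity of $c_n$ in $n$ together with $Ec_{n_{k+1}}/Ec_{n_k}\to 1$ then sandwiches $c_n/Ec_n$ for $n\in[n_k,n_{k+1}]$ between two ratios both tending to $1$, finishing the argument.

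The main obstacle is the martingale concentration step with sub-exponential rather than bounded increments: one has to re-centre the truncated $\Delta_p$ so they remain a martingale difference sequence while keeping the compensator negligible compared with $\log n$. The exponential tail (\ref{e7}) makes the truncation essentially harmless, and this step is, in spirit, the content of (2.28)--(2.29) of \cite{r1} (proved there for $b_{0,n}$); the adaptation to $c_n$ is parallel and requires no new idea beyond the Kesten--Zhang machinery already imported via Lemma \ref{l4}.
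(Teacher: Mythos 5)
Your overall skeleton (martingale decomposition (\ref{e19}), gap control via (\ref{e6}) and Corollary \ref{c1}, Borel--Cantelli along $n_k=2^k$, monotone interpolation) matches the paper's, and your comparison $c_n\leq T(\mathbf{0},\mathcal{C}_{m(q_n)})\leq c_n+T(\partial B(n),\partial B(2^{m(q_n)+1}))$ is a perfectly valid substitute for the paper's citation of (2.74) from \cite{r1}. But the central concentration step contains a quantitative flaw. If you truncate $\Delta_p$ at level $M\log n$ you get increments bounded by $M\log n$, and Azuma--Hoeffding over $q_n\approx\log_2 n$ terms at deviation $t=\varepsilon\log n$ yields
\[
2\exp\!\left(-\frac{\varepsilon^2(\log n)^2}{2\,q_n\,(M\log n)^2}\right)
\;=\;2\exp\!\left(-\frac{c\,\varepsilon^2}{\log n}\right)\;\longrightarrow\;2,
\]
which is vacuous, not $Cn^{-c\varepsilon^2}$. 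The truncation level is simply too large relative to the number of increments; to make Azuma non-trivial at scale $\varepsilon\log n$ you must truncate at something like $M\log\log n$, and even then you must handle the re-centering using only the \emph{unconditional} tail (\ref{e7}), which does not automatically control the conditional compensator $E[\Delta_p\mathbf{1}\{|\Delta_p|>L\}\mid\mathcal{F}_{p-1}]$. The ``Bernstein for $\psi_1$ martingales'' alternative you mention has the same issue: such inequalities typically require conditional sub-exponential bounds, and (\ref{e7}) is unconditional.

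The paper avoids these difficulties with a different truncation and a different concentration mechanism: it sets $\widetilde{\Delta}_{p,q}:=\Delta_{p,q}\mathbf{1}[m(p)-p\leq C_4\log q]$, truncating on the \emph{geometry} rather than on the size of $\Delta_p$. The payoff is that $\widetilde{\Delta}_{p,q}$ and $\widetilde{\Delta}_{r,q}$ become genuinely \emph{independent} whenever $|p-r|\geq C_4\log q+2$, and then a plain fourth-moment/Chebyshev bound gives $P(|\sum_p\widetilde{\Delta}_{p,q}|\geq q^{1-\varepsilon})\leq Cq^{-2+4\varepsilon}$ with no martingale machinery at all. Note also the paper works at the much more modest deviation scale $q^{1-\varepsilon}$ rather than $\varepsilon q$, which is all that is needed for summability. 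Finally, your closing step ``$Ec_{n_{k+1}}/Ec_{n_k}\to 1$'' does not follow from $Ec_n\asymp\log n$ alone; you need the increment bound $Ec_{2^q}-Ec_{2^{q-1}}\leq C$ (the paper's (\ref{e5}), proved via (\ref{e4})), which your write-up does not establish.
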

\begin{proof}
As we have discussed before Theorem \ref{th1}, Kesten and Zhang got
similar result for $b_{0,n}$ in \cite{r1} (see (1.15) in \cite{r1}),
but they did not give the proof.  Now let us use the estimates in
\cite{r1} to prove Lemma \ref{l1}.  First we claim that for each
$\varepsilon\in(0,\frac{1}{4})$, there exist constants
$\delta_1>0,C_6>0$ such that
\begin{equation}\label{e8}
P(|T(\textbf{0},\mathcal {C}_{m(q)})-ET(\textbf{0},\mathcal
{C}_{m(q)})|\geq q^{1-\varepsilon})\leq C_6q^{-(1+\delta_1)}.
\end{equation}
Let us prove this. First we define
$$\widetilde{\Delta}_{p,q}:=\Delta_{p,q}I[m(p)-p\leq C_4\log q],$$
where $C_4>3/C_1$ and $C_1$ is from Lemma \ref{l4}. By (\ref{e19}),
we write
\begin{align*}
P(|T(\textbf{0},\mathcal {C}_{m(q)})-ET(\textbf{0},\mathcal
{C}_{m(q)})|\geq q^{1-\varepsilon})&\leq P(\Delta_{p,q}\neq
\widetilde{\Delta}_{p,q}\mbox{ for some }p\leq
q)\\
&\quad+P(|\sum_{p=0}^{q}\widetilde{\Delta}_{p,q}|\geq
q^{1-\varepsilon}).
\end{align*}
Let us now estimate each term separately.  For the first term,
\begin{align*}
P(\Delta_{p,q}\neq \widetilde{\Delta}_{p,q}\mbox{ for some }p\leq
q)&\leq\sum_{p=0}^{q}P(m(p)-p\geq C_4\log q)\\
&\leq (q+1)\exp(-C_4C_1\log q)\quad\mbox{ by }(\ref{e6}).
\end{align*}
Now we estimate the second term.  Similar as the second half of
Lemma 1 in \cite{r1},  we have:  For any $0\leq p,r\leq q$, if
$|p-r|\geq C_4\log q+2$, then $\widetilde{\Delta}_{p,q}$ and
$\widetilde{\Delta}_{r,q}$ are independent. The proof is omitted
here.  Therefore, by Chebyshev's inequality and (\ref{e7}), there
exists a constant $C_5>0$ such that,
\begin{align}
P\left(|\sum_{p=0}^{q}\widetilde{\Delta}_{p,q}|\geq
q^{1-\varepsilon}\right)&\leq \frac{E[\sum_{p=0}^{q}\widetilde{\Delta}_{p,q}]^4}{q^{4(1-\varepsilon)}}\notag\\
&\leq \frac{24E[\sum_{0\leq p_1\leq\ldots\leq p_4\leq q,|p_1-p_4|\leq 3C_4\log q+6}\prod_{i=1}^4\widetilde{\Delta}_{p_i,q}]}{q^{4(1-\varepsilon)}}\notag\\
&\leq\frac{C_5q^2}{q^{4(1-\varepsilon)}}=
C_5q^{-2+4\varepsilon}\label{e3}
\end{align}
By the bounds of the two terms given above, (\ref{e8}) is proved.
(2.74) in \cite{r1} says that there exist constants
$\delta_2>2,C_7>0$ such that
\begin{equation}\label{e20}
P(|c_{2^q}-T(\textbf{0},\mathcal {C}_{m(q)})|\geq x)\leq
C_7x^{-\delta_2}.
\end{equation}
By (\ref{e8}) and (\ref{e20}), for all large $q$ we have
\begin{align}
P(|c_{2^q}-Ec_{2^q}|\geq 3q^{1-\varepsilon})&\leq P(|c_{2^q}-T(\textbf{0},\mathcal {C}_{m(q)})|\geq q^{1-\varepsilon})\notag\\
&\quad+P(|T(\textbf{0},\mathcal {C}_{m(q)})-ET(\textbf{0},\mathcal {C}_{m(q)})|\geq q^{1-\varepsilon})\notag\\
&\quad+P(|ET(\textbf{0},\mathcal {C}_{m(q)})-Ec_{2^q}|\geq q^{1-\varepsilon})\notag\\
&\leq C_7q^{-(2-2\varepsilon)}+C_6q^{-(1+\delta_1)}\label{e22}.
\end{align}

By RSW, FKG and Corollary \ref{c1}, there exist constants
$C_8,C_9,C_{10}>0$ such that for all $x\geq 1$,
\begin{align}
&P(c_{2^q}-c_{2^{q-1}}\geq x)\notag\\
&\quad\leq P(\mbox{there exists no open circuits surrounding
\textbf{0} in }B(2^q)\backslash
B(2^{q-\lfloor\sqrt{x}\rfloor}))\notag\\
&\quad\quad+P(T(\partial B(2^{q-\lfloor\sqrt{x}\rfloor}),\partial
B(2^q))\geq
x)\notag\\
&\quad\leq\exp(-C_8\sqrt{x})+C_9\exp(-C_{10}x),\label{e4}
\end{align}
where $B(2^{q-\lfloor\sqrt{x}\rfloor}):=B(1)$ for $x>q^2$. Then for
all $q\geq 1$ we get
\begin{equation}\label{e5}
Ec_{2^q}-Ec_{2^{q-1}}\leq C_{11},
\end{equation}
where $C_{11}$ is a universal constant.  Define event
$$\mathcal {A}_q:=\{|c_{2^q}-Ec_{2^q}|\geq 3q^{1-\varepsilon}\}\cup\{c_{2^q}-c_{2^{q-1}}\geq q^{1-\varepsilon}\}.$$
It follows from (\ref{e22}) and (\ref{e4}) that
$\sum_{q=1}^{\infty}P(\mathcal {A}_q)<\infty$.  Then the
Borel-Cantelli lemma implies that, a.s. $\mathcal {A}_q$ happens
only finitely many times as $q\rightarrow\infty$.  For large
$2^{q-1}\leq n\leq 2^q$, from $c_{2^{q-1}}\leq c_n\leq c_{2^{q}}$
and (\ref{e5}) we have
\begin{align*}
&\{|c_n-Ec_n|\geq 9q^{1-\varepsilon}\}\\
&\subset\{|c_n-c_{2^q}|\geq
3q^{1-\varepsilon}\}\cup\{|c_{2^q}-Ec_{2^q}|\geq
3q^{1-\varepsilon}\}\cup\{|Ec_{2^q}-Ec_n|\geq
3q^{1-\varepsilon}\}\\
&\subset\mathcal {A}_q.
\end{align*}
Then we know for each $\varepsilon\in(0,\frac{1}{4})$, as
$n\rightarrow\infty$,
\begin{equation}\label{e23}
|c_n-Ec_n|\leq 9q^{1-\varepsilon}~~a.s.,
\end{equation}
where $2^{q-1}\leq n\leq 2^q$.  By Corollary \ref{c2}, $Ec_n$ lies
between two positive multiples of $q$, then Lemma \ref{l1} follows
from (\ref{e23}).
\end{proof}

As it is well discussed in \cite{r19}, there are several different
ways to describe the scaling limit of critical planar percolation.
In the present paper, we focus on the \textbf{full scaling limit}
constructed by Camia and Newman in \cite{r2}, described in detail
below.

First, we compactify $\mathbb{R}^2$ as usual into
$\dot{\mathbb{R}}^2:=\mathbb{R}^2\cup\{\infty\}\simeq \mathbb{S}^2$.
Let $d_{\mathbb{S}^2}$ be the induced metric on
$\dot{\mathbb{R}}^2$.  We call a continuous map from the circle to
$\mathbb{R}^2$ a loop, and the loops are identified up to
reparametrization by homeomorphisms of the circle with positive
winding.  We equip the space $L$ of loops with the following metric:
$$d_{L}(\ell_1,\ell_2):=\inf_{\phi}\sup_{t\in \mathbb{R}/\mathbb{Z}}d_{\mathbb{S}^2}(\ell_1(t),\ell_2(\phi(t))),$$
where the infimum is taken over all homeomorphisms of the circle
which have positive winding.  Let $\mathcal {L}$ be the space of
countable collections of loops in $L$.  Consider the Hausdorff
topology on $\mathcal {L}$ induced by $d_L$. That is, for
$c_1,c_2\in \mathcal {L}$, let
$$d_{\mathcal {L}}:=\inf\{\varepsilon:\forall \ell_1\in c_1,\exists \ell_2\in c_2\mbox{ such that }d_{L}(\ell_1,\ell_2)\leq \varepsilon\mbox{ and vice versa}\}.$$

For the critical site percolation on $\mathbb{T}$, with probability
1 there is no infinite cluster, therefore the cluster boundaries
form loops.  We orient a loop counterclockwise if it has open sites
on its inner boundary and closed sites on its outer boundary,
otherwise we orient it clockwise.

The following celebrated theorem is shown in \cite{r2}:
\begin{theorem}\label{th2}
As $\eta\rightarrow 0$, the collection of all cluster boundary loops
of critical site percolation on $\eta\mathbb{T}$ converges in law,
under the topology induced by $d_{\mathcal {L}}$, to a probability
distribution on $\mathcal {L}$, which is a continuum nonsimple loop
process.
\end{theorem}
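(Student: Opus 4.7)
The plan is to take Smirnov's convergence theorem for single percolation interfaces to chordal SLE$_6$ as the fundamental input, and to lift it to the convergence of the entire loop ensemble by an inductive exploration argument combined with a priori tightness estimates.

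First I would establish tightness of the laws of the collections of cluster boundary loops on $\eta\mathbb{T}$ in $(\mathcal{L},d_{\mathcal{L}})$ as $\eta\to 0$. The two ingredients are (i) RSW plus FKG, which give that the number of cluster boundary loops of diameter at least $\varepsilon$ inside a bounded region is tight uniformly in $\eta$, and (ii) arm-exponent estimates (in particular the one-arm and polychromatic six-arm exponents), which control the probability that a loop has small diameter but large ``length'' in the Hausdorff sense, and which guarantee that below any scale $\varepsilon$ the contribution of small loops disappears in the $d_{\mathcal{L}}$-metric as $\eta\to 0$ followed by $\varepsilon\to 0$. Together these show that the family is precompact in the topology induced by $d_{\mathcal{L}}$.

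Next I would describe a candidate limit object by an iterative exploration. Fix a deterministic dense countable set $\{z_i\}\subset\mathbb{R}^2$. For each $z_i$, the outermost cluster boundary loop surrounding $z_i$ can be discovered by running a chordal interface with appropriate (monochromatic) boundary data in a large disk, and Smirnov's theorem together with a localization argument identifies its scaling limit. After this loop is revealed, by the domain Markov property of critical percolation, inside each of its complementary components one again has an independent critical percolation configuration with appropriate boundary conditions, to which the same procedure can be applied; the same holds outside. Iterating over $i$ and over nested scales produces a consistent family of finite-dimensional distributions for a loop process on $\dot{\mathbb{R}}^2$; tightness then provides a probability measure $\mathcal{P}$ on $\mathcal{L}$ realising them. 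Finally, I would identify any subsequential limit of the discrete ensembles with $\mathcal{P}$ by noting that the loops surrounding each $z_i$, together with their nested interiors and exteriors, are determined by single-interface limits, and that the countable family $\{z_i\}$ is dense enough to determine the joint law on $\mathcal{L}$.

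The main obstacle is the simultaneous control of loops at all scales: one must rule out ``dust'' of degenerate loops accumulating in the limit, ensure that the map from discrete interfaces to their continuum counterparts is continuous in $d_{\mathcal{L}}$, and justify that the local chordal-to-SLE$_6$ convergence, proved inside a fixed domain with fixed boundary data, can be patched together into a \emph{global} statement about the full collection of loops. This patching is where the bulk of the work lies; the a priori estimates of step one and Smirnov's theorem of step two must be combined carefully enough that the domain Markov manipulations in the limit are measurable and the iterative construction terminates in a genuinely countable loop ensemble.
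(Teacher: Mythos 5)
The paper does not prove Theorem~\ref{th2} at all; it is quoted verbatim as the main result of Camia and Newman \cite{r2}, so there is no in-paper argument for your sketch to be measured against. Judged against the actual construction in \cite{r2}, your outline captures the broad strategy — tightness via RSW, a candidate limit built by iterated exploration fed by Smirnov's chordal convergence, identification of subsequential limits by finitely many discovered loops — but it misstates and underestimates the step on which the whole proof turns.

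The genuine gap is in your step two. A macroscopic cluster boundary loop surrounding a point is \emph{not} the trace of one chordal interface run with monochromatic boundary data; a chordal exploration path in a disk produces a curve from one boundary point to another, not a closed loop around an interior point. In \cite{r2} the loop is assembled from excursions of a branching exploration process, and the real work is to show that these pieces glue into a closed loop in the scaling limit, that the touching structure of the SLE$_6$ excursions reproduces the discrete touching structure, and that after conditioning on a discovered (non-simple, self-touching) loop the percolation inside each complementary component is again a fresh critical configuration to which the procedure can be reapplied. None of this is routine ``patching''; it is the core of the cited proof and requires a one-crossing/stability estimate (uniform over $\eta$) that you do not mention, ensuring that all but finitely many loops are small in any compact region so that finitely many explored loops determine the limit in $d_{\mathcal L}$. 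Your tightness step is also somewhat off target: the six-arm exponent is not what controls the $d_{\mathcal L}$-tightness of the ensemble — since tiny loops are matched for free in that metric, RSW alone bounds the number of loops of diameter $\ge \varepsilon$ in a box, which is what precompactness needs. So the plan points in the right direction, but as written the central loop-discovery mechanism is wrong and would not produce the object you want.
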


The continuum nonsimple loop process in Theorem \ref{th2} is just
the full scaling limit introduced by Camia and Newman in \cite{r2}.
Since it is also called the \textbf{conformal loop ensemble} CLE$_6$
in \cite{r15} (for the general CLE$_{\kappa},8/3\leq\kappa\leq 8$,
see \cite{r15,r8,r20}), we just call it CLE$_6$ in the present
paper.  Although extracting geometric information is far from being
straightforward from CLE$_6$ (according to \cite{r19}), it was used
to show the uniqueness of the quad-crossing percolation limit in
Subsection 2.3 in \cite{r21} and the existence of the monochromatic
arm exponents in Section 4 in \cite{r3}.  In fact, the key idea of
the proof of (\ref{e25}) is stimulated by the latter.

Several properties of CLE$_6$ are established. For example, if two
loops touch each other and have the same orientation, then almost
surely one loop cannot lie inside the other one.  Conversely, if two
loops of different orientations touch each other, then one has to be
inside the other one.  See \cite{r2} for more details.
\begin{figure}
\begin{center}
\includegraphics[height=0.4\textwidth]{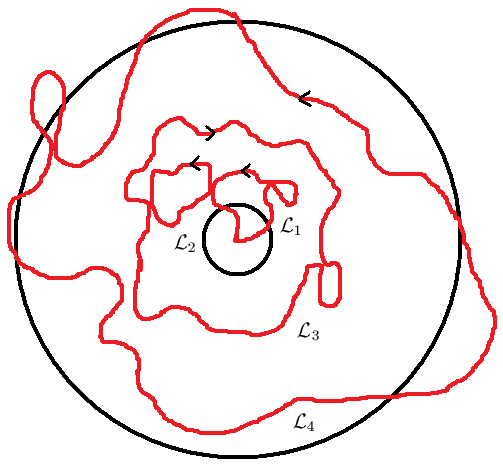}
\caption{A chain $\mathcal {C}=\ell_1\ell_2\ell_3\ell_4$ connecting
two circles. It is easy to see that $T(\mathcal {C})=2$}\label{fig1}
\end{center}
\end{figure}
For CLE$_6$, we want to define the passage time between two circles.
First, we call a sequence of loops $\mathcal {C}=\ell_1\ldots\ell_l$
a \textbf{chain} which connects $\partial \mathbb{D}_m$ and
$\partial \mathbb{D}_n$, if $\mathcal {C}$ satisfies the following
conditions:
\begin{itemize}
\item  $\ell_1\cap \mathbb{D}_m\neq \emptyset, \ell_l\cap \{\mathbb{R}^2\backslash \mathbb{D}_n\}\neq \emptyset$, $\ell_i\subset \mathbb{D}_n\backslash\mathbb{D}_m, 1<i<l$.
\item  For $1\leq i\leq l-1$, if $\ell_i$ is counterclockwise,
then $\ell_{i+1}$ touches $\ell_i$.
\item  For $1\leq i\leq l-1$, if $\ell_i$ is clockwise, then $\ell_{i+1}$ is the minimal counterclockwise loop surrounding $\ell_i$.
\end{itemize}
See Fig. \ref{fig1}. Define the \textbf{passage time} of chain
$\mathcal {C}$ as
$$T(\mathcal {C}):=\mbox{the number of occurrences that $\ell_{i+1}$ touches counterclockwise loop $\ell_{i}$}.$$
The passage time between $\partial \mathbb{D}_m$ and $\partial
\mathbb{D}_n$ is defined as
$$T(\partial \mathbb{D}_m,\partial \mathbb{D}_n):=\inf\{T(\mathcal {C}):\mathcal {C}\mbox{ is chain connecting $\partial \mathbb{D}_m$ and $\partial \mathbb{D}_n$}\}.$$
From (\ref{e25}) below, $T(\partial \mathbb{D}_m,\partial
\mathbb{D}_n)$ is finite with probability $1$.  For the passage time
of this continuum model, we have a strong law of large numbers:
\begin{lemma}\label{l3}
There exists a constant $0<\mu_0<\infty$ such that
\begin{equation*}
\lim_{j\rightarrow\infty}\frac{T(\partial \mathbb{D}_1,\partial
\mathbb{D}_{2^j})}{j}=\lim_{j\rightarrow\infty}\frac{ET(\partial
\mathbb{D}_1,\partial \mathbb{D}_{2^j})}{j}=\mu_0~~a.s.
\end{equation*}
\end{lemma}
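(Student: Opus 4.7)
The plan is to apply Kingman's subadditive ergodic theorem to the scale-indexed family $X_{m,n}:=T(\partial\mathbb{D}_{2^m},\partial\mathbb{D}_{2^n})$, $0\le m<n$. By the (conformal, hence) scale invariance of CLE$_6$, the joint distribution of $(X_{m+k,n+k})_{0\le m<n}$ is independent of $k$, which supplies the stationarity required for the diagonal scaling shift.

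The first step is near-subadditivity: for $0\le m\le k\le n$,
$$X_{m,n}\le X_{m,k}+X_{k,n}+C_0,$$
for a universal constant $C_0$. Given optimal chains $\mathcal{C}_1$ realizing $X_{m,k}$ and $\mathcal{C}_2$ realizing $X_{k,n}$, I would splice them together near $\partial\mathbb{D}_{2^k}$, inserting at most $C_0$ bridging loops where necessary so that the ``touching'' and ``minimal surrounding'' alternation in the definition of a chain is respected at the junction. Setting $\tilde X_{m,n}:=X_{m,n}+C_0$ then gives an exactly subadditive and stationary family, and Kingman's theorem produces a random limit $X_{0,n}/n\to Y$ a.s.\ and in $L^1$, provided $EX_{0,1}<\infty$.

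Next I would verify $0<\mu_0:=\lim_n (EX_{0,n})/n<\infty$; by Fekete's lemma this limit exists as soon as $EX_{0,1}+C_0<\infty$. For finiteness, I would push Corollary \ref{c1} through the scaling limit Theorem \ref{th2} to obtain an exponential tail for $X_{0,1}$, hence a finite mean. For positivity, the opposite direction of the scaling limit is used: the discrete passage time $T(\partial B(n),\partial B(2^j n))$ converges weakly to $X_{0,j}$ as $n\to\infty$ (by Theorem \ref{th2} together with suitable continuity of the chain functional in $d_{\mathcal{L}}$), while a standard RSW-FKG estimate in the spirit of Corollary \ref{c2} yields the uniform lower bound $ET(\partial B(n),\partial B(2^j n))\ge cj$; combined with the exponential tail these pass through Fatou to give $EX_{0,j}\ge cj$, hence $\mu_0>0$.

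The main obstacle is to show $Y=\mu_0$ a.s., i.e.\ to remove the randomness from Kingman's limit. This amounts to ergodicity of CLE$_6$ under the dilation $z\mapsto 2z$. Heuristically this should follow from the asymptotic independence of critical percolation in disjoint annuli at widely separated scales, transferred through Theorem \ref{th2} to give a mixing property strong enough to trivialize the relevant tail $\sigma$-algebra. As a fallback that bypasses ergodicity entirely, one can instead establish the concentration bound $|X_{0,n}-EX_{0,n}|=o(n)$ a.s.\ by mirroring the proof of Lemma \ref{l1}: expose the nested loops $L_1,L_2,\ldots$ surrounding $\mathbf{0}$ from \cite{r8}, write $X_{0,n}-EX_{0,n}$ as a sum of martingale differences whose tails are bounded by a continuum analogue of (\ref{e7}), and conclude via the Chebyshev and Borel--Cantelli argument already used in the discrete setting.
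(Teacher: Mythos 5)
Your overall plan (dyadic annuli, scale invariance for stationarity, subadditive ergodic theorem, mixing for ergodicity, push discrete bounds through Theorem \ref{th2} for integrability) matches the paper in spirit, but there is a genuine gap in the first and most important step: you took the additivity in the wrong direction. You claim near-\emph{sub}additivity
$$X_{m,n}\le X_{m,k}+X_{k,n}+C_0,$$
and propose to prove it by \emph{splicing} two optimal chains near $\partial\mathbb{D}_{2^k}$, ``inserting at most $C_0$ bridging loops.'' This is not established and is in fact the hard direction: the last loop of the inner chain and the first loop of the outer chain need not touch, need not be nested, and can sit in completely different parts of the annulus; in a CLE$_6$ configuration there is no deterministic bound on the number of extra loop-touches required to connect them, so a fixed $C_0$ is not available. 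The paper avoids this entirely by proving the \emph{super}additivity of $T$ (equivalently subadditivity of $-T+1$):
$$T(\partial\mathbb{D}_1,\partial\mathbb{D}_{2^i})+T(\partial\mathbb{D}_{2^i},\partial\mathbb{D}_{2^j})\le T(\partial\mathbb{D}_1,\partial\mathbb{D}_{2^j})+1,$$
which follows immediately by \emph{decomposing} any chain $\ell_1\ldots\ell_l$ from $\partial\mathbb{D}_1$ to $\partial\mathbb{D}_{2^j}$ at the first index $k$ for which $\ell_1\ldots\ell_k$ already bridges the inner annulus, taking $\ell_{k-1}\ldots\ell_l$ for the outer annulus; the $+1$ accounts for the one shared loop. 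Decomposition is unconditional; splicing is not. You should therefore apply Kingman/Liggett to $X_{i,j}:=-T(\partial\mathbb{D}_{2^i},\partial\mathbb{D}_{2^j})+1$, not to $T$ plus a constant.

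The remaining ingredients of your proposal are essentially correct and track the paper: scale invariance of CLE$_6$ gives stationarity and the stated distributional invariance; a mixing argument via approximation by annulus-local events gives ergodicity (your first alternative is the one used; your fallback via martingale concentration is not needed); Corollary \ref{c1} pushed through Theorem \ref{th2} gives the exponential tail and hence $E|X_{0,j}|\le Cj$. One small imprecision: Fatou's lemma for weak convergence only gives $ET(\partial\mathbb{D}_1,\partial\mathbb{D}_{2^j})\le\liminf_n ET(\partial B(n),\partial B(2^jn))$, which is the wrong direction for positivity; what you actually need (and have, from the exponential tail) is uniform integrability, which upgrades the weak convergence to convergence of means in both directions.
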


\begin{proof}
For short, let $X_{i,j}:=-T(\partial \mathbb{D}_{2^i},\partial
\mathbb{D}_{2^j})+1,0\leq i<j$.  Now we verify that $X_{i,j},0\leq
i<j$ satisfy the conditions of the subadditive ergodic theorem (see
\cite{r12}):
\begin{itemize}
\item $X_{0,j}\leq X_{0,i}+X_{i,j}.$

If $T(\partial \mathbb{D}_1,\partial \mathbb{D}_{2^j})>0$, then for
any chain $\mathcal {C}=\ell_1\ldots\ell_{l}$ connecting $\partial
\mathbb{D}_1$ and $\partial \mathbb{D}_{2^j}$, clearly we have
$l\geq 2$ by the definition of chain. Then it is easy to see that we
can find some $2\leq k\leq l$ such that $\mathcal
{C}_1=\ell_1\ldots\ell_k$ is a chain connecting $\partial
\mathbb{D}_1$ and $\partial \mathbb{D}_{2^i}$, and $\mathcal
{C}_2=\ell_{k-1}\ldots\ell_{l}$ is a chain connecting $\partial
\mathbb{D}_{2^i}$ and $\partial \mathbb{D}_{2^j}$. Therefore,
$$T(\partial \mathbb{D}_1,\partial \mathbb{D}_{2^i})+T(\partial \mathbb{D}_{2^i},\partial \mathbb{D}_{2^j})\leq T(\mathcal {C}_1)+T(\mathcal {C}_2)\leq T(\mathcal {C})+1,$$
which implies the above inequality.  If $T(\partial
\mathbb{D}_1,\partial \mathbb{D}_{2^j})=0$, the inequality holds
obviously.
\item $\{X_{jk,(j+1)k},j\geq 1\}$ is stationary ergodic sequence for each $k$.

Define the scaling transformation
$\tau_k:\mathbb{R}^2\rightarrow\mathbb{R}^2,~~\textbf{x}\mapsto
\textbf{x}/2^k$.  Then for each configuration $\omega$ of CLE$_6$,
$X_{jk,(j+1)k}(\omega)=X_{k,2k}(\tau_k^{j-1}\omega)$. Since CLE$_6$
is invariant under scalings, $\tau_k$ is measure preserving and
$\{X_{jk,(j+1)k},j\geq 1\}$ is stationary. Now we show that $\tau_k$
is also mixing, which implies $\{X_{jk,(j+1)k},j\geq 1\}$ is
ergodic.  When $A,B$ are events which depend only on the realization
of the CLE$_6$ inside an annulus, then
$\lim_{j\rightarrow\infty}P(A\cap\tau_k^{-j}B)=P(A)P(B)$ follows
immediately.  For arbitrary events $A$ and $B$, one approximates $A$
and $B$ by events which depend only on the realization of CLE$_6$
inside the annulus $\mathbb{D}_{1/\varepsilon}\backslash
\mathbb{D}_{\varepsilon}$, and let $\varepsilon\rightarrow 0$.  Then
the result follows easily.
\item The distribution of $\{X_{i,i+k},k\geq 1\}$ does not depend on $i$.

CLE$_6$ is invariant under scalings, which implies this immediately.
\item $EX^+_{0,1}<\infty$, where $X^+_{0,1}:=\max\{X_{0,1},0\}$.  For each $j$, $EX_{0,j}\geq -Cj$, where $C<\infty$.

It is obvious that $EX^+_{0,1}\leq 1$.  From (\ref{e25}) below we
know that the discrete passage time $T(\partial B(n),\partial
B(2^jn))\rightarrow_{d} T(\partial \mathbb{D}_1,\partial
\mathbb{D}_{2^j})$ as $n\rightarrow\infty$. Then by Corollary
\ref{c1}, there exist constants $C_1,C_2,C_3>0$, such that for all
$j\geq 1$,
$$P(T(\partial \mathbb{D}_1,\partial \mathbb{D}_{2^j})\geq C_1j)\leq C_2\exp(-C_3j),$$
which ends the proof immediately.
\end{itemize}
Then by the subadditive ergodic theorem, there exists a constant
$0<\mu_0<\infty$ such that
$$\lim_{j\rightarrow\infty}\frac{X_{0,j}}{j}=\lim_{j\rightarrow\infty}\frac{EX_{0,j}}{j}=-\mu_0~~a.s.,$$
which ends the proof.
\end{proof}

\begin{lemma}\label{l2}
\begin{equation*}
\lim_{n\rightarrow\infty}\frac{Ec_n}{\log n}=\frac{\mu}{2}.
\end{equation*}
\end{lemma}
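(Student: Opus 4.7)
The plan is to fix a large integer $k$, decompose the ball $B(n)$ into a chain of dyadic annuli $A(2^{ik},2^{(i+1)k})$ of common aspect ratio $2^k$, match $Ec_n$ to the sum of the expected passage times across these annuli, and identify the limiting rate via the convergence of each annulus passage time to the CLE$_6$ quantity whose asymptotics are controlled by Lemma \ref{l3}.

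Setting $q=\lfloor\log_2 n\rfloor$, $m=\lfloor q/k\rfloor$ and $T_i:=T(\partial B(2^{ik}),\partial B(2^{(i+1)k}))$, any path from $\mathbf{0}$ to $\partial B(n)$ has a crossing sub-path in each annulus $A(2^{ik},2^{(i+1)k})$, yielding the lower bound $c_n\ge\sum_{i=0}^{m-1}T_i$. The matching upper bound will use the circuit representation (as in (2.39)/Appendix of \cite{r1}): both $c_n$ and each $T_i$ coincide up to $O(1)$ with the maximal number of disjoint closed circuits around $\mathbf{0}$ in the corresponding region. Splitting a maximal disjoint family of circuits in $B(n)$ according to the dividing circles $\partial B(2^{ik})$, and controlling the number of straddling circuits at each scale through a thin buffer annulus plus Lemma \ref{l5}, produces
\begin{equation*}
c_n\le\sum_{i=0}^{m-1}T_i+T(\partial B(2^{mk}),\partial B(n))+O(m).
\end{equation*}

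For each fixed $k$, the scaling limit (Theorem \ref{th2}) together with the chain-based definition of the CLE$_6$ passage time gives $T(\partial B(r),\partial B(2^k r))\Rightarrow T(\partial\mathbb{D}_1,\partial\mathbb{D}_{2^k})$ as $r\to\infty$ (this is the convergence (\ref{e25}) invoked in the paper), and the uniform exponential tail of Corollary \ref{c1} provides the uniform integrability needed to lift this to $ET_i\to M_k:=ET(\partial\mathbb{D}_1,\partial\mathbb{D}_{2^k})$ as $i\to\infty$. Given $\varepsilon>0$, choose $I_0=I_0(k,\varepsilon)$ with $|ET_i-M_k|<\varepsilon$ for $i\ge I_0$; bounding $ET_i\le Ck$ for the finitely many small-$i$ terms by Corollary \ref{c1}, taking expectations in the two-sided bound, and dividing by $\log n\sim mk\log 2$ yield
\begin{equation*}
\limsup_{n\to\infty}\left|\frac{Ec_n}{\log n}-\frac{M_k}{k\log 2}\right|\le\frac{C\varepsilon+C'}{k\log 2}.
\end{equation*}
Letting $\varepsilon\to 0$ and then $k\to\infty$, Lemma \ref{l3} forces $M_k/k\to\mu_0\in(0,\infty)$, so $Ec_n/\log n\to\mu_0/\log 2$; the theorem's constant is then $\mu:=2\mu_0/\log 2$.

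The delicate point is the upper bound in the decomposition step: controlling the number of disjoint closed circuits around $\mathbf{0}$ that straddle a given dividing circle $\partial B(2^{ik})$. My plan is to insert thin buffer annuli just inside each $\partial B(2^{ik})$ and bound the disjoint closed radial crossings of each buffer via Lemma \ref{l5}, keeping the per-scale error $O(1)$; the aggregate error $O(m)=O(\log n/k)$ then contributes $O(1/k)$ to $Ec_n/\log n$, which vanishes in the final $k\to\infty$ limit.
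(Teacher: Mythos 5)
Your decomposition into dyadic annuli of aspect ratio $2^k$, the two-sided bound relating $Ec_n$ to $\sum_i ET_{k,i}$, the passage to $M_k:=ET(\partial\mathbb{D}_1,\partial\mathbb{D}_{2^k})$ via the exponential tail from Corollary \ref{c1}, and the final $k\to\infty$ step via Lemma \ref{l3} (with $\mu=2\mu_0/\log 2$) track the paper's argument essentially step for step. The one implementation variant is how you control circuits straddling the dividing spheres: the paper introduces $N_{k,i}$, the maximal number of disjoint closed circuits surrounding $\mathbf{0}$ that intersect $\partial B(2^{ki})$, and bounds $EN_{k,i}\le C$ uniformly by RSW, FKG and BK; your thin buffer annulus plus Lemma \ref{l5} plan arrives at a similar $O(1)$ per-scale error and would also serve.

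The genuine gap is that you dispose of the convergence $T_{k,i}\Rightarrow T(\partial\mathbb{D}_1,\partial\mathbb{D}_{2^k})$ (the paper's (\ref{e25})) with the remark that Theorem \ref{th2} ``together with the chain-based definition of the CLE$_6$ passage time gives'' it. In fact this claim is the bulk of the paper's proof of Lemma \ref{l2} and is far from a direct consequence of Theorem \ref{th2}. The discrete passage time is defined through the site values, while Theorem \ref{th2} gives only weak convergence of the collection of cluster-boundary loops in a Hausdorff-type topology on loop ensembles; identifying the two requires several nontrivial steps which the paper carries out: (i) use of the polychromatic half-plane $3$-arm and plane $6$-arm exponents to show that, with probability tending to $1$, the closed sites on a geodesic across $A(k,i)$ stay a macroscopic distance from the annulus boundaries and from one another (the paper's (\ref{e12})); (ii) re-expression of $T_{k,i}$ as the quantity $T'_{k,i}$ counting chains of open clusters separated by single closed sites (the paper's (\ref{e13})); (iii) a further re-expression as a discrete chain-of-loops quantity $T''_{k,i}$ (the paper's (\ref{e14})); and (iv) the fact, argued via Beffara--Nolin, that two loops touching in the scaling limit corresponds asymptotically to their discrete counterparts being separated by exactly one site, which lets one pass from $T''_{k,i}$ to the continuum chain passage time. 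Without this identification the proposal never actually establishes the convergence (\ref{e25}) that the rest of your argument depends upon, so the proof as written is incomplete at its central technical step.
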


\begin{proof}
For short, define
\begin{align*}
&A(k,i):=B(2^{k(i+1)})\backslash B(2^{ki}),\quad k\geq 1,i\geq 0.\\
&T_{k,i}:=T(\partial B(2^{ki}+1),\partial B(2^{k(i+1)})),\quad k\geq
1,i\geq 0.
\end{align*}
Recall the definition of $T(\partial \mathbb{D}_m,
\partial \mathbb{D}_n)$ before Lemma \ref{l3}.  For the passage time
defined respectively for the discrete FPP and CLE$_6$, we claim that
for any fixed $k\geq 1$, as $i\rightarrow\infty$,
\begin{equation}\label{e25}
T_{k,i}\rightarrow_d T(\partial \mathbb{D}_1,\partial
\mathbb{D}_{2^k}).
\end{equation}
The proof of this claim is similar as the arguments in Section 4 in
\cite{r3}, but it's more complicated.  First we show that for each
$0<\varepsilon<1$, there is a $\delta>0$, such that
\begin{align}
\lim_{i\rightarrow\infty}P(&\mbox{for any geodesic $\gamma$ connecting $\partial B(2^{ki}+1)$ and $\partial B(2^{k(i+1)})$ in $A(k,i)$,}\notag\\
&\mbox{for any closed site $x\in \gamma$, dist$(x,\partial B(2^{ki}+1)\cup\partial B(2^{k(i+1)}))\geq \delta 2^{ki}$,}\notag\\
&\mbox{for any two closed sites $x,y\in\gamma$, dist$(x,y)\geq
\delta 2^{ki}$})\geq 1-\varepsilon\label{e12}.
\end{align}
Observe that
$$T_{k,i}=\{\mbox{maximal number of disjoint closed circuits which surround \textbf{0} in $A(k,i)$}\}.$$
Therefore, if $\gamma$ is a geodesic connecting $\partial
B(2^{ki}+1)$ and $\partial B(2^{k(i+1)})$ in $A(k,i)$, then there
exist $T(\gamma)$ disjoint circuits which surround \textbf{0} in
$A(k,i)$ and pass through the $T(\gamma)$ closed sites in $\gamma$
respectively.  Using the fact that the polychromatic half-plane
$3$-arm exponent is $2$ (in fact, one needs a more general version,
see Lemma 6.8 in \cite{r22}) and the polychromatic plane $6$-arm
exponent is larger that $2$ (see e.g. \cite{r23}), one can get
(\ref{e12}) by standard arguments.

Define
\begin{align*}
T'_{k,i}:=\inf\{&|\mathcal {S}|-1:\mathcal {S}\mbox{ is a sequence of open clusters, such that the first cluster}\\
&\mbox{intersects with $\partial B(2^{ki}+1)$, the last cluster intersects with $\partial B(2^{k(i+1)})$,}\\
&\mbox{and two consecutive clusters are separated by only one closed
site}\},
\end{align*}
if there exists no such $\mathcal {S}$, we let $T'_{k,i}=0$. From
(\ref{e12}), it is easy to see that
\begin{equation}\label{e13}
\lim_{i\rightarrow\infty}P(T_{k,i}=T'_{k,i})=1.
\end{equation}
\begin{figure}
\begin{center}
\includegraphics[height=0.5\textwidth]{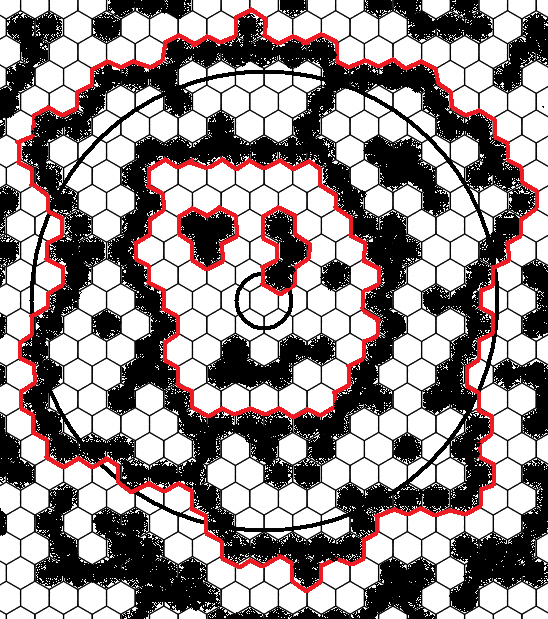}
\caption{A discrete chain $\mathcal {C}=\ell_1\ell_2\ell_3\ell_4$
connecting $\partial B(1)$ and $\partial B(8)$. It is clear that
$T(\mathcal {C})=2$}\label{fig2}
\end{center}
\end{figure}
Now let us introduce the definition of chain for the critical site
percolation on $\mathbb{T}$, which is analogous to its continuum
version for CLE$_6$. We call a sequence of (discrete) loops
$\mathcal {C}=\ell_1\ldots\ell_{l}$ a (discrete) \textbf{chain}
which connects $\partial B(m)$ and $\partial B(n)$, if $\mathcal
{C}$ satisfies the following conditions:
\begin{itemize}
\item  $\ell_1\cap B(m)\neq \emptyset, \ell_l\cap \{\mathbb{R}^2\backslash B(n)\}\neq \emptyset$, $\ell_i\subset B(n)\backslash B(m), 1<i<l$.
\item  For $1\leq i\leq l-1$, if $\ell_i$ is counterclockwise,
then $\ell_{i+1}$ and $\ell_i$ are separated by only one site.
\item  For $1\leq i\leq l-1$, if $\ell_i$ is clockwise, then $\ell_{i+1}$ is the minimal counterclockwise loop surrounding $\ell_i$.
\end{itemize}
See Fig. \ref{fig2}.  For a discrete chain $\mathcal {C}$, let
\begin{align*}
T(\mathcal {C}):=&\mbox{the number of occurrences that $\ell_{i+1}$
and counterclockwise loop $\ell_{i}$ are}\\
&\mbox{separated by only one site}.
\end{align*}
Define
$$T''_{k,i}:=\inf\{T(\mathcal {C}):\mathcal {C}\mbox{ is a chain
connecting $\partial B(2^{ki}+1)$ and $\partial B(2^{k(i+1)})$}\},$$
if there exists no chain connecting $\partial B(2^{ki}+1)$ and
$\partial B(2^{k(i+1)})$, let $T''_{k,i}=0$.  It is easy to get that
\begin{equation}\label{e14}
\lim_{i\rightarrow\infty}P(T'_{k,i}=T''_{k,i})=1.
\end{equation}
By (\ref{e12}),(\ref{e13}) and (\ref{e14}), the value of $T''_{k,i}$
is determined by macroscopic loops with high probability as
$i\rightarrow\infty$. It has been argued in \cite{r3}, two loops
touch in the scaling limit is exactly the asymptotic probability
that they are separated by exactly one site on discrete lattice.
Therefore, using Theorem \ref{th2}, comparing the definitions of
$T''_{k,i}$ and $T(\partial \mathbb{D}_1,\partial
\mathbb{D}_{2^k})$, we have
\begin{equation}\label{e24}
T''_{k,i}\rightarrow_d T(\partial \mathbb{D}_1,\partial
\mathbb{D}_{2^k}).
\end{equation}
Combining (\ref{e13}), (\ref{e14}) and (\ref{e24}), claim
(\ref{e25}) follows.  By Corollary \ref{c1}, there exists a constant
$C(k)>0$, such that for all $i\geq 0$, $ET_{k,i}\leq C(k)$.   This
and (\ref{e25}) immediately give
\begin{equation}\label{e26}
ET_{k,i}\rightarrow ET(\partial \mathbb{D}_1,\partial
\mathbb{D}_{2^k}).
\end{equation}
By the convergence of the Ces\`{a}ro mean and (\ref{e26}), we have
\begin{equation}\label{e1}
\lim_{j\rightarrow\infty}\frac{\sum_{i=0}^j ET_{k,i}}{j}=ET(\partial
\mathbb{D}_1,\partial \mathbb{D}_{2^k}).
\end{equation}
Now let us show that for each $0<\varepsilon<1$, there exists
$k_0(\varepsilon)>0$, such that for each $k\geq k_0$, for $n$
sufficiently large (depending on k),
\begin{equation}\label{e2}
\frac{\sum_{i=0}^{\lfloor\log_{2^k} n\rfloor} ET_{k,i}}{Ec_n}\geq
1-\varepsilon.
\end{equation}
For $i\geq 0,k\geq1$, denote by $N_{k,i}$ the maximum number of
disjoint closed circuits which surround \textbf{0} and intersect
with $\partial B(2^{ki})$.  It is easy to see that
\begin{equation}\label{e27}
\sum_{i=0}^{\lfloor\log_{2^k} n\rfloor-1} T_{k,i}\leq c_n\leq
2+\sum_{i=0}^{\lfloor\log_{2^k} n\rfloor+1} (T_{k,i}+N_{k,i}).
\end{equation}
By RSW, FKG and BK inequality, using standard argument, we know
there exists a constant $C_1>0$, such that for all $i\geq 0,k\geq
1$, $P(N_{k,i}\geq x)\leq \exp(-C_1x)$.  Hence there is a universal
constant $C_2>0$ (independent of $i,k$), such that $EN_{k,i}<C_2$.
Then by (\ref{e27}), there exists a constant $C_3>0$, such that for
all large $n$,
\begin{equation}\label{e28}
\sum_{i=0}^{\lfloor\log_{2^k} n\rfloor} ET_{k,i}-C(k)\leq Ec_n\leq
\sum_{i=0}^{\lfloor\log_{2^k} n\rfloor} ET_{k,i}+C_3\log_{2^k} n,
\end{equation}
where $C(k)$ is introduced before (\ref{e26}).  Combining Corollary
\ref{c2} and (\ref{e28}) gives (\ref{e2}). Then by (\ref{e1}),
(\ref{e2}) and Lemma \ref{l3},
$$\lim_{n\rightarrow\infty}\frac{Ec_n}{\log n}=\lim_{k\rightarrow\infty}\lim_{n\rightarrow\infty}\frac{\sum_{i=0}^{\lfloor\log_{2^k} n\rfloor} ET_{k,i}}{\log n}=\lim_{k\rightarrow\infty}\frac{ET(\partial \mathbb{D}_1,\partial \mathbb{D}_{2^k})}{k\log 2}=\frac{\mu_0}{\log 2}:=\frac{\mu}{2},$$
where $\mu_0$ is from Lemma \ref{l3}.
\end{proof}

\section{Proof of theorem}
\begin{proof}[Proof of Theorem \ref{th1}]
From Lemma \ref{l1} and Lemma \ref{l2}, we have
\begin{equation}\label{e10}
\lim_{n\rightarrow\infty}\frac{c_n}{\log n}=\frac{\mu}{2}~~a.s.
\end{equation}
Now let us use (\ref{e10}) to show $(\ref{th12})$.  First, it is
apparent that $b_{0,n}\geq c_n$.  Thus if one can show that for any
given $\varepsilon>0$, as $n\rightarrow\infty$,
\begin{equation}\label{e11}
b_{0,n}-c_n\leq \varepsilon \log n~~a.s.,
\end{equation}
then (\ref{e10}) and (\ref{e11}) imply (\ref{th12}). We proceed to
prove (\ref{e11}). Recall the definition of $m(p)$ before Lemma
\ref{l4}.  By (\ref{e6}) and Lemma \ref{l5}, we can choose a small
constant $\delta(\varepsilon)
>0$, such that for all large $q$,
\begin{align*}
&P(b_{0,2^q}-c_{2^q}\geq (\varepsilon\log 2^q)/3)\\
&\leq P( m(\lfloor(1-\delta)q\rfloor)\geq q)\\
&\quad+P(\mbox{there exist $\lfloor(\varepsilon\log 2^q)/3\rfloor$
disjoint closed crossing
paths in }R(2^{\lfloor(1-\delta)q\rfloor},2^{q+1}))\\
&\leq \exp(-C_1q)+C_2\exp(-C_3q).
\end{align*}
By (\ref{e4}),
\begin{equation*}
P(c_{2^q}-c_{2^{q-1}}\geq (\varepsilon\log 2^q)/3)\leq
\exp(-C_4\sqrt{q})+C_5\exp(-C_6q).
\end{equation*}
Define events: $\mathcal {A}_q:=\{b_{0,2^q}-c_{2^q}\geq
(\varepsilon\log 2^q)/3\}\cup\{c_{2^q}-c_{2^{q-1}}\geq
(\varepsilon\log 2^q)/3\}$.  Then the inequalities above and
Borel-Cantelli lemma implies that
\begin{equation}\label{e29}
\mbox{a.s. }\mathcal {A}_q\mbox{ happens only finitely many times as
}q\rightarrow\infty.
\end{equation}
Both $\{b_{0,n},n\geq 1\}$ and $\{c_n,n\geq 1\}$ are increasing
sequences, which implies that for $2^{q-1}<n\leq 2^q$
$$b_{0,n}-c_n\leq b_{0,2^q}-c_{2^q}+c_{2^q}-c_{2^{q-1}}.$$
Combining this and (\ref{e29}) gives (\ref{e11}).

(2.84) in \cite{r1} essentially tells us that as
$n\rightarrow\infty$,
$$\frac{1}{\sqrt{\log n}}[T(\textbf{0},(n,0))-T(\textbf{0},\partial B(n/2))-T((n,0),\partial B((n,0),n/2))]\rightarrow 0~~\mbox{in probability}.$$
Combining this and Lemma \ref{l2} gives (\ref{th11}).

Now let us explain why the convergence in (\ref{th11}) does not
occur almost surely.  To show this, with probability $1$, we find a
subsequence that converges to $\frac{\mu}{2}$ in the following. For
$i\geq 1$, define event
\begin{align*}
\mathcal {B}_i:=\{&\mbox{there exists an open circuit surrounding $\textbf{0}$ in $A(2^i,2^{i+1})$, with}\\
&\mbox{an open path connecting it to $\partial B(2^{i+1})$}\}.
\end{align*}
By RSW and FKG, there is a universal constant $C>0$, such that
$P(\mathcal {B}_i)>C$. Then with probability $1$ we can find an
infinite sequence $\{i_j,j\geq 1\}$ such that $A_{i_j}$ happens.
Conditioned on $A_{i_j}$, there exists a $2^{i_j}<n(i_j)\leq
2^{i_j+1}$, such that $a_{0,n(i_j)}=c_{2^{i_j+1}}$.  Then by
(\ref{e10}) we have
$$\lim_{j\rightarrow\infty}\frac{a_{0,n(i_j)}}{\log n(i_j)}=\frac{\mu}{2}~~a.s.$$
This completes the proof.
\end{proof}

%%%%%%%%%%%%%%%%%%%%%%%%%%%%%%%%%%%%%%%%%%%%%%%%%%%%%%%%%%%%%%%%%%%
%%                                                               %%
%% Use the two commands below for producing your bibliography    %%
%% with bibtex, then comment again the commands and include the  %%
%% content of the .bbl file in this file below the commands.     %%
%%                                                               %%
%%%%%%%%%%%%%%%%%%%%%%%%%%%%%%%%%%%%%%%%%%%%%%%%%%%%%%%%%%%%%%%%%%%

%\bibliographystyle{amsplain}
%\bibliography{yourbibfilename}

% add below the content of your .bbl file produced by bibtex.

\pdfbookmark[1]{References}{}

%%%%%%%%%%%%%%%%%%%%%%%%%%%%%%%%%%%%%%%%%%%%%%%%%%%%%%%%%%%%%%%%%%%
%%                                                               %%
%% You may add acknowledgments (optional).                       %%
%%                                                               %%
%%%%%%%%%%%%%%%%%%%%%%%%%%%%%%%%%%%%%%%%%%%%%%%%%%%%%%%%%%%%%%%%%%%

%%%%%%%%%%%%%%%%%%%%%%%%%%%%%%%%%%%%%%%%%%%%%%%%%%%%%%%%%%%%%%%%%%%
%%                                                               %%
%% You have reached the end of your document.                    %%
%%                                                               %%
%%%%%%%%%%%%%%%%%%%%%%%%%%%%%%%%%%%%%%%%%%%%%%%%%%%%%%%%%%%%%%%%%%%

\end{document}